\documentclass[12pt,a4paper]{article}
\usepackage[T1]{fontenc}
\usepackage[margin=3cm,footskip=1cm]{geometry}

\usepackage{amsmath} 
\usepackage{amssymb,amsthm,bm,mathtools,xspace,booktabs,url}
\usepackage{graphicx,etoolbox}
\usepackage{ dsfont }
\usepackage{hyperref}
\usepackage{bbold}
\usepackage{tikz}
\usepackage{bigints}
\usepackage{mathrsfs}
\usepackage[shortlabels]{enumitem}
\usepackage{xcolor}
\usepackage{color}

\newcommand{\dd}{\mathrm{d}}
\makeatletter
\global\let\tikz@ensure@dollar@catcode=\relax
\makeatother
\setlist{
  listparindent=\parindent,
  parsep=0pt,
}
\usepackage{amssymb}

\AtBeginEnvironment{thebibliography}{
  \small
  \setlength\itemsep{0.75em plus 0.5em minus 0.75em}%
}

\usepackage{caption}
\captionsetup{
  tableposition=top,
  font=small,
  labelfont=bf}

\usepackage[raggedright]{titlesec}

\numberwithin{equation}{section}
\usepackage{amsfonts}

\theoremstyle{plain} 
\newtheorem{theorem}{Theorem}[section]

\newtheorem{definition}[theorem]{Definition}

\theoremstyle{definition} 

\newtheorem{Remark}[theorem]{Remark}

\usepackage{authblk}

\makeatletter
\newcommand\CorrespondingAuthor[1]{
  \begingroup
  \def\@makefnmark{}
  \footnotetext{Corresponding author: #1}
  \endgroup
}
\makeatother

\makeatletter
\renewenvironment{abstract}{%
  \small%
  \providecommand\keywords{%
    \par\medskip\noindent\textit{Keywords:}\xspace}%
  \begin{center}%
    \bfseries \abstractname\vspace{-.5em}\vspace{\z@}%
  \end{center}%
  \quote%
}
{
\endquote}
\makeatother

%
%
\usepackage[above,below,section]{placeins}

\frenchspacing

\usepackage[all]{onlyamsmath}

\usepackage{multirow}

\pagestyle{plain} 


%
%





\usepackage{xcolor}
\usepackage{color}

\usepackage{mathrsfs}
\usepackage{pdfpages}

\definecolor{darkmagenta}{rgb}{0.5,0,0.5}
\definecolor{darkgreen}{rgb}{0,0.6,0}
\definecolor{darkblue}{rgb}{0,0,0.6}
\definecolor{darkred}{rgb}{0.8,0,0}
\definecolor{mellow}{rgb}{.847, 0.72, 0.525}
\newcommand{\rouge}[1]{\textcolor{darkred}{#1}}

      %


\usepackage[normalem]{ulem}

\newcommand{\mce}{\mathcal{E}}

\begin{document}

\title{Strong convergence to two-dimensional alternating Brownian motion processes}

\author[1]{Guy Latouche}
\affil[1]{Department of Computer Science, Universit\'{e} Libre de Bruxelles}
\author[2]{Giang T. Nguyen}
\author[2]{Oscar Peralta}
\affil[2]{School of Mathematical Sciences, The University of Adelaide}
\date{\empty}
\maketitle

\begin{abstract}
{Flip-flop processes refer to a family of stochastic fluid
  processes which converge to either a standard Brownian motion (SBM)
  or to a Markov modulated Brownian motion (MMBM). In recent years, it
  has been shown that complex distributional aspects of the univariate
  SBM and MMBM can be studied through the limiting behaviour of
  flip-flop processes. Here, we construct two classes of bivariate
  flip-flop processes whose marginals converge strongly to SBMs and
  are dependent on each other, which we refer to as \emph{alternating
    two-dimensional Brownian motion processes}. While the limiting
  bivariate processes are not Gaussian, they possess desirable
  qualities, such as being tractable and having a time-varying
  correlation coefficient function.}

\keywords{ Strong convergence, two-dimensional Brownian motion, stochastic fluid processes, flip-flops, time-varying correlation coefficient} 

\emph{2010 Mathematics Subject Classification:} Primary: 60F15, 60G50; Secondary: 60G15

\end{abstract}

\section{Introduction}
{Let $\mathcal{B} = \{B(t)\}_{t\ge 0}$ denote a standard Brownian motion. The construction of discrete-time random walk approximations to $\mathcal{B}$ is a well-known problem initiated by Donsker in \cite{donsker1951}. His work dealt with weak approximations, that is, with the convergence of the \emph{laws} of appropriately scaled random walks to the law of $\mathcal{B}$. A different technique using strong approximations was proposed by Strassen \cite{strassen1964invariance}, which involved the construction of a probability space where the paths of the scaled random walks were shown to converge uniformly on compact intervals to the path of $\mathcal{B}$. Not only did Strassen's technique provide an alternative approach to the construction of approximations to $\mathcal{B}$, but it also yielded stronger results: a strongly convergent family of stochastic processes to $\mathcal{B}$ is automatically weakly convergent, while the opposite is not always true. Furthermore, strong convergence allows one to claim convergence of pathwise functionals of the processes.}

In this paper we focus on strong approximations to $\mathcal{B}$ and related models, not using discrete-time random walks, but a family of continuous-time stochastic processes which we introduce next.
\begin{definition} 
 {Consider} a family of processes $\{\mathcal{F}_\lambda\}_{\lambda > 0}=\{\{F_\lambda(t)\}_{t\ge 0}\}_{\lambda > 0}$, where
\begin{align*}
F_\lambda(t) := \sqrt{\lambda}\int_0^t J_\lambda(s)\dd s,\quad t\ge 0,
\end{align*}
and the \emph{phase} process $\mathcal{J}_\lambda = \{J_\lambda(t)\}_{t\ge 0}$ denotes a Markov jump process with state space $\mathcal{E} =\{1,-1\}$ and intensity matrix given by 
\begin{align}
   \label{e:Lam}
\Lambda= \left[\begin{array}{rr}
-\lambda&\lambda\\
\lambda&-\lambda
\end{array}\right]. 
\end{align}
We refer to $\mathcal{F}_\lambda$ as the \emph{standard flip-flop process}.
\end{definition} 
\noindent {Intuitively,} the process $\mathcal{F}_\lambda$ describes the path of a particle in $\mathds{R}$ with finite constant velocity which randomly switches direction at a constant rate. Such a process has been studied in the literature as the \emph{uniform transport process} \cite{watanabe1968approximation,griego1971almost}, and {is also a special case} of the \emph{telegraph process} \cite{goldstein1951,kac1974} and, {more recently, of} the \emph{flip-flop process} \cite{ramaswami2013fluid,latouche2015morphing}.

{The standard flip-flop process $\mathcal{F}_\lambda$ belongs to a larger class of piecewise-linear models modulated by a Markov jump process, called \emph{stochastic fluid processes} (SFP), for which several first passage results and formulae are available \cite{latouche2018analysis}. Even if the behaviour of $\mathcal{F}_\lambda$ is somewhat similar to that of a random walk with double-sided exponential increments, notice that changes in direction of $\mathcal{F}^\lambda$ happen at random time epochs. In comparison, the discrete-time random walks approximations to $\mathcal{B}$ devised in \cite{donsker1951, strassen1964invariance} have inflection points on a deterministic grid. Though this characteristic of $\mathcal{F}_\lambda$ may seem inconvenient at first, once a limiting result is established, it actually allows us to translate well-understood results from SFPs into complex second-order models such as the Markov modulated Brownian motion (MMBM), as recently evidenced in \cite{latouche2015morphing,latouche2015fluid,ahn2017quadratically,latouche2017slowing,ahn2017time,latouche2018markov}. Besides being important objects of study within the matrix-analytic-methods community, both SFPs and MMBMs have been succesfully used to model random systems in queueing theory, risk theory, finance and epidemiology.}


{The \emph{weak} convergence of $\mathcal{F}_\lambda$ to $\mathcal{B}$ as $\lambda \rightarrow \infty$ was first established in \cite{watanabe1968approximation}. Through different constructions, it was shown in \cite{griego1971almost,nguyen2019strong,nguyen2020explicit} that $\mathcal{F}_{\lambda}$ also converges \emph{strongly} to $\mathcal{B}$; that is,  
they showed the existence of a probability space $(\Omega,\mathscr{F}, \mathds{P})$ such that for all $T\ge 0$,
\begin{align*}
	\lim_{\lambda\rightarrow\infty}\sup_{0\le s\le T} |F_\lambda(s)-B(s)| = 0 \quad \mbox{almost surely}.
\end{align*}}
\indent In order to extend these strong approximations into two-dimensional processes, consider two families of standard flip-flops, $\{\mathcal{F}^1_{\lambda}\}$ and $\{\mathcal{F}^2_{\lambda}\}$, with corresponding families of phase processes $\{\mathcal{J}_{\lambda}^1\}$ and $\{\mathcal{J}_{\lambda}^2\}$, living on two probability spaces $(\Omega^1,\mathscr{F}^1, \mathds{P}^1)$ and $(\Omega^2,\mathscr{F}^2, \mathds{P}^2)$, respectively. One can see that, on $(\Omega^1\times\Omega^2,\mathscr{F}^1\otimes\mathscr{F}^2, \mathds{P}^1\times\mathds{P}^2)$, the bivariate process $(\mathcal{F}^1_{\lambda},\mathcal{F}^2_{\lambda})$ converges strongly to a two-dimensional Brownian motion $(\mathcal{B}^1, \mathcal{B}^2)$ as $\lambda\rightarrow\infty$. 
%

The process $\left(\mathcal{F}^1_\lambda, \mathcal{F}^2_\lambda\right)$ can be redefined such that $\mathcal{F}^1_\lambda$ and $\mathcal{F}^2_\lambda$ are jointly modulated by a common phase process, as follows. 

\begin{definition} {Let 
$\boldsymbol{\mathcal{J}}_{\lambda} = \{\boldsymbol{J}_{\lambda}(t)\} = \left\{\left(J_{\lambda}^1(t), J_{\lambda}^2(t)\right)\right\}$ be} a Markov jump process on state space $\mathcal{E}^2 = \{(1,1),(1,-1), (-1,1), (-1,-1)\}$ with the intensity
matrix
\begin{align}
	\label{eq:indgen1}
	\Lambda\oplus\Lambda = \left[\begin{array}{rrrr}
		-2\lambda & \lambda & \lambda & 0\\ \lambda & -2\lambda & 0 & \lambda\\ \lambda & 0 &-2\lambda & \lambda\\0&\lambda & \lambda & -2\lambda
			\end{array}\right],
\end{align}
where $\oplus$ denotes the Kronecker sum. Let \begin{align*}
F^i_\lambda(t) := \sqrt{\lambda}\int_0^t \pi_i\left(\boldsymbol{J}_\lambda(s)\right)\dd s, \quad t \ge 0, i = 1, 2, 
\end{align*}
where $\pi_i:\mathcal{E}^2 \mapsto \mathcal{E}$ denotes the $i$th coordinate projection. We refer to $\left(\mathcal{F}^1_{\lambda},\mathcal{F}^2_{\lambda}\right)$ as a \emph{standard bivariate flip-flop} process, and $\boldsymbol{\mathcal{J}}_\lambda$ its underlying phase process. 
\end{definition} 
{More broadly, we define the following.
\begin{definition}
A \emph{bivariate flip-flop} process
$\left(\mathcal{G}^1_\lambda,\mathcal{G}^2_\lambda\right) = \left\{\left({G}^1_\lambda(t),{G}^2_\lambda(t)\right)\right\}_{t\ge
  0}$ is defined by
\begin{align}
   \label{e:gi}
G^i_\lambda(t) := \sqrt{\lambda}\int_0^t \pi_i^*(\bm{J}_\lambda(s))\dd s,\quad t\ge 0,
\end{align}
where $\bm{\mathcal{J}}_\lambda$ is a Markov jump process on $\mathcal{E}^2\times\mathcal{S}$, where $\mathcal{S}$ is some finite collection of states, and $\pi_i^*: \mathcal{E}^2\times\mathcal{S}\rightarrow \mathds{R}$, $i\in\{1,2\},$ is such that 
\begin{align*} 
	\pi_1^*((1,\mathcal{E})\times\mathcal{S}) & =\pi_2^*((\mathcal{E},1)\times\mathcal{S})=1, \\
	 \pi_1^*((-1,\mathcal{E})\times\mathcal{S}) & =\pi_2^*((\mathcal{E},-1)\times\mathcal{S})=-1.
\end{align*} 
\end{definition} }

{Thus, a bivariate flip-flop process has an underlying phase process $\boldsymbol{\mathcal{J}}_\lambda$ with a considerably richer structure than that of (\ref{eq:indgen1}), which may be used to produce dependent coordinate processes instead of independent ones. One natural goal is to characterise all possible strong limits for bivariate flip-flop processes and study their dependence structures.} 

As a step toward that goal, we construct in this paper two families of bivariate flip-flop processes and show that each converges strongly
to a process, of which the marginals are standard Brownian motion and
the correlation between the two components is a time-varying function. In short, the marginals of the
limiting process alternate between \emph{synchronising} intervals
during which they evolve with identical increments to each other, and
\emph{desynchronising} intervals during which they evolve as mirror images of each other.  In Section \ref{sec:exprefl} the intervals are
exponentially distributed and we call the limiting process a \emph{two-dimensional exponentially alternating Brownian motion}. In Section \ref{sec:MAP} the
synchronisation and desynchronisation epochs are determined by a
continuous-time Markovian arrival process (MAP); we refer to this process as a \emph{two-dimensional  MAP alternating Brownian motion}.
We determine the time-varying correlation functions of the limiting
processes in Sections \ref{sec:exprefl} and \ref{sec:MAP},
respectively.

From a different perspective, the bivariate flip-flop processes can be
seen as a continuous-time analogue of the discrete-time
\emph{bootstrapping random walk} concept proposed in
\cite{Collevecchio2016, Collevecchio2019}. There, the authors start by
constructing a discrete-time random walk from Bernoulli random
variables, and then reuse (bootstrap) these random variables to create
additional walks; together, the random walks converge weakly
to two- or higher-dimensional Brownian motions. In our case, we will
start by constructing a continuous-time random walk (standard
flip-flop) from exponential random variables, and then reuse them to
create an additional continuous-time random walk; together, the random
walks (bivariate flip-flop) converge strongly to a two-dimensional
process.  {In one construction, the strong limit is a two-dimensional Brownian motion; in two others, the strong limits are two-dimensional alternating Brownian motion processes. 

{Another related model is the \emph{zig-zag process}~\cite{bierkens2016zig}, a multidimensional piecewise deterministic Markov process which behaves in a piecewise linear fashion over time. Changes in direction of the path of a zig-zag process occur according to some space-dependent intensity function, the latter chosen in such a way that the stationary behaviour of the process follows some prespecified distribution. Although flip-flop processes are in fact a space-homogeneous version of the zig-zag processes, their use differ: zig-zag processes are commonly used to implement efficient sampling methods \cite{bierkens2017limit,bierkens2019zig}, while the goal of flip-flop processes is to explain properties of complex second-order processes.}
} 

\section{Preliminaries} 
	\label{sec:prelim}

Our construction of bivariate flip-flops is inspired by
{\cite{nguyen2020explicit}}, and we sketch in this section the construction proposed there for one-dimensional standard flip-flops. The creation in {\cite{nguyen2020explicit}}  of a family of standard flip-flops $(\mathcal{F}_{\lambda}, \mathcal{J}_{\lambda})$, with $\lambda \rightarrow\infty$, relies on inspecting the standard Brownian motion $\mathcal{B}$ at the arrival epochs of a Poisson process, as explained next.

{For $\lambda>0$, let $\{\tau^{\lambda}_k\}_{k\ge 0}$, with $\tau_0^\lambda=0$, be the
arrival epochs of a Poisson process of intensity $2\lambda$, independent of $\mathcal{B}$}. For notational convenience, we omit the superscript~$\lambda$ when no ambiguity might arise. 
{Let 
	\begin{align*} 
	C_k := B(\tau_{k+1})-B(\tau_{k}),\quad k\ge 0;
	\end{align*} 
  the sequence $\{C_k\}_{k\ge 0}$ has i.i.d. elements with common double exponential distribution of parameter $2\sqrt{\lambda}$ \cite[Theorem 2.1]{nguyen2020explicit}, that is, following the density function 
  \[f(x)=\sqrt{\lambda}\, e^{-2\sqrt{\lambda}\,|x|},\quad x \in\mathds{R}.\]
  One can verify that $\{|C_k|\}_{k \geq 0}$ and $\{C_k/|C_k|\}_{k \geq 0}$ are independent i.i.d. sequences where $|C_k|\sim \mbox{Exp}(2\sqrt{\lambda})$ and
$
  \mathds{P}\left(C_k/|C_k| =1\right)=\mathds{P}\left(C_k/|C_k| =-1\right)=1/2.
$}

	Next, define the epochs {$\{\xi_k^\lambda\}_{k\ge 0}$}, which we write {$\{\xi_k\}_{k\ge 0}$}, with $\xi_0=0$ by
        \[{\xi_{k+1}-\xi_{k} :=\lambda^{-1/2} |C_k|,}\]
        and the process $\mathcal{J}_\lambda=\{J_\lambda (t)\}_{t\ge 0}$ by
        \begin{align}
        		\label{eqn:Jlambda} 
		J_\lambda(t) := C_k/ |C_k|, \quad \mbox{ for } t\in [\xi_k,\xi_{k+1}), \,k\ge 0.
	\end{align} 
{Note that $\mathcal{J}_\lambda$ might or might not change states at a
  given epoch $\xi_k$. Let $\{\chi_k\}_{k\ge 1}$, 
$\chi_0:=0$, denote the jump times of $\mathcal{J}_\lambda$. By the independence properties of $\{|C_k|\}_{k\le 0}$ and $\{C_k/|C_k|\}_{k\le 0}$, the elements of $\{\chi_{k+1}-\chi_k\}_{k\ge 0}$ are i.i.d. random variables which follow a $\mbox{Geometric}(0.5)$ random sum of exponentials of intensity $2\lambda$. Equivalently, $\chi_{k + 1} - \chi_k$ is an exponential random variable of intensity $\lambda$. Thus, $\mathcal{J}_\lambda$ is a Markov jump process driven by the rate matrix $\left[\begin{smallmatrix*}[r] -\lambda & \lambda\\ \lambda &-\lambda\end{smallmatrix*}\right]$ and has initial distribution $(1/2,1/2)$.
}
    
{Intuitively, to see that $\mathcal{F}_{\lambda}=\{F_\lambda (t)\}_{t\ge 0}$, defined by 
\begin{align}
	\label{eq:strongF5}
	F_\lambda(t) = \sqrt{\lambda}\int_0^t J_\lambda(s)\dd s,
\end{align} 
approximates $\mathcal{B}$ on $[0, T$], note that 
$F_\lambda (\xi_k)=B(\tau_k)$ for $k\ge 0$. Moreover, since $\tau_k \sim \mbox{Erlang}(k,2 \lambda)$ and $\xi_k \sim \mbox{Erlang}(k,2 \lambda)$, we have $\mathds{E}(\tau_k)=\mathds{E}(\xi_k)$, implying that on average the epochs $\{\tau_k\}$ coincide with $\{\xi_k\}$. More precisely, Nguyen and Peralta~\cite{nguyen2020explicit} showed that 
\begin{align*} 
\lim_{\lambda \rightarrow\infty}\max_{k: \tau_k^\lambda,\chi_k^\lambda < T} |\tau_k^\lambda - \xi_k^\lambda| & = 0\quad \mbox{almost surely,}
\end{align*} 
and furthermore,
\begin{align}\label{eq:strongF}\lim_{\lambda\rightarrow\infty}\sup_{0\le s\le T} |F_\lambda(s)-B(s)| = 0 \quad \mbox{almost surely}.
\end{align}
See Figure \ref{fig:standardflipflop} for a sample path approximation of the standard Brownian motion $\mathcal{B}$ with the flip-flop process $\mathcal{F}_\lambda$.
}
    \begin{figure}[h!]
	\begin{center} 
	\hspace*{-0.8cm}
	\includegraphics[scale=0.45]{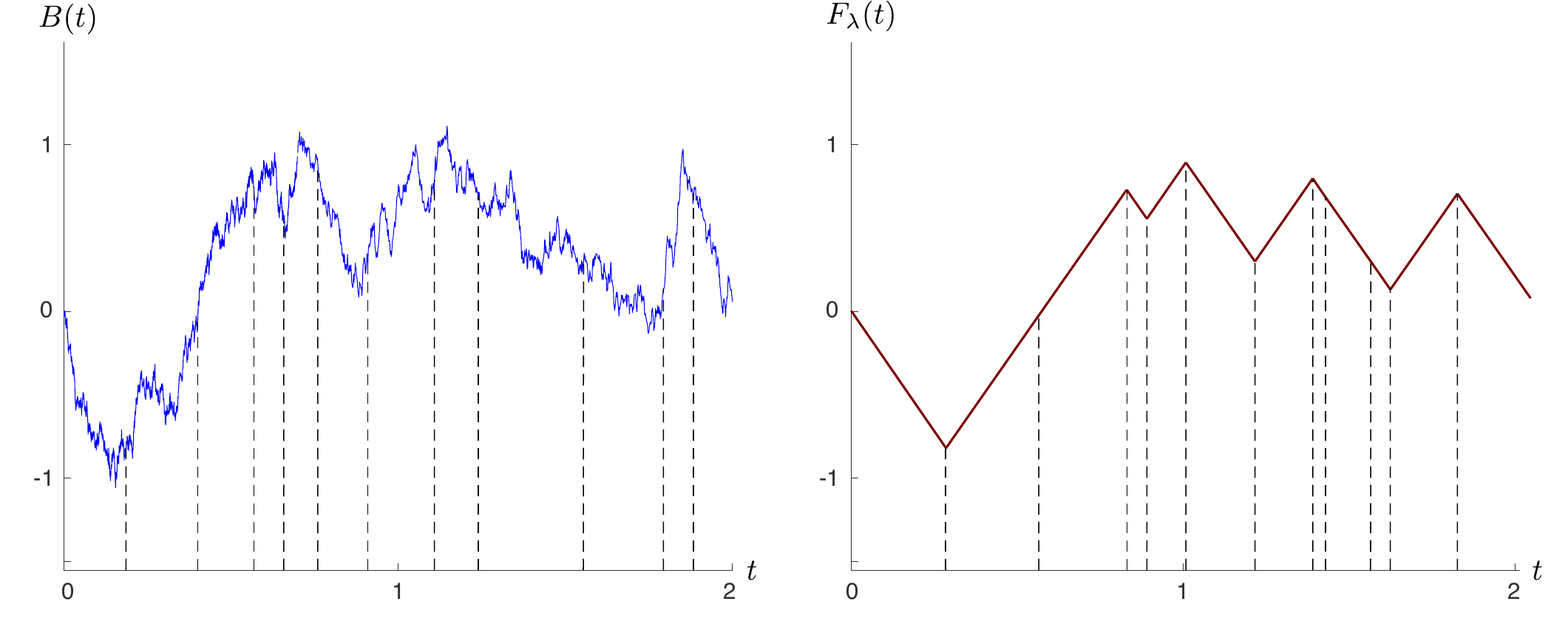}
	%
	\end{center} 
\caption{(Left) {A sample path of an SBM $\mathcal{B}$. The dashed lines correspond to the arrival epochs $\{\tau_k\}_{k\geq 0}$.} (Right) {An associated sample path of a standard flip-flop
  $\mathcal{F}_{\lambda}$. The epochs $\{\xi_k\}_{k\geq 0}$ are shown with dashed lines. Note the values of $F_{\lambda}(\xi_k)$ match
  those of $B(\tau_k)$.}}
\label{fig:standardflipflop}
\end{figure}

 
\section{Reflection at exponential times}
	\label{sec:exprefl}
Here, we construct on a probability space $(\Omega, \mathscr{F}, \mathds{P})$ a family of bivariate flip-flops {$\{(\mathcal{F}^1_{\lambda},\mathcal{F}^2_{\lambda})\}_{\lambda \ge 0}$}, such that {$\{(\mathcal{F}^1_{\lambda},\mathcal{F}^2_{\lambda})\}$} converges strongly to a so-called \emph{two-dimensional exponentially alternating Brownian motion} as $\lambda \rightarrow \infty$. 	
	
\begin{definition} 
	\label{defn:eJH} 
	Let $\mathcal{X} = \{X(t)\}_{t \geq 0}$ be a Markov jump
        process with state space $\{0,1\}$, initial probability $(1,0)$ and intensity matrix 
	\begin{align} 
	\label{eqn:eJH}
		\Pi = \left[\begin{array}{rr} 
		-\alpha& \alpha \\
		\beta & -\beta 
		\end{array}\right], 
	\end{align}  
	and let $\mathcal{B}$ be a standard Brownian motion. Define $\mathcal{B}^* = \{B^*(t)\}_{t \geq 0}$ such that 
	\begin{align}\label{eq:defBstar} 
		B^*(t) = \int_0^t(-1)^{X(s)}\dd B(s), \quad t \geq 0. 
	\end{align} 
The process $(\mathcal{B}, \mathcal{B}^*)$ is called a two-dimensional exponentially alternating Brownian motion.
\end{definition} 
\noindent In the intervals during which $X(t) = 0$, the processes $\mathcal{B}$ and $\mathcal{B}^*$ evolve with precisely the same increment; we refer to these periods as \emph{synchronising intervals}. In intervals during which $X(t) = 1$, the increment of one process is the negative of the other; these are \emph{desynchronising intervals}. 

{It is important to notice that $(\mathcal{B}, \mathcal{B}^*)$ is not a Gaussian process: for instance, the distribution of $B(t)-B^*(t)$ has a point mass at $0$ of size $e^{-\alpha t}$, corresponding to the event $\{X(s)=0 \mbox{ for all } 0\le s\le t\}$. Despite not being Gaussian, $(\mathcal{B}, \mathcal{B}^*)$ is still a process with tractable properties, as we show later in Section \ref{sec:dependence1}.}
 
\subsection{Construction} 
	\label{subsec:constructexpo}
  {Let $\mathcal{J}_\lambda^1 = \mathcal{J}_\lambda$ and define $\mathcal{J}_\lambda^2=\{J_\lambda^2 (t)\}_{t\ge 0}$ by}
  \[ {J_\lambda^2 (t) := (-1)^{X(t)}J_\lambda (t),\quad t\ge 0,}\]
{where $\mathcal{J}_\lambda=\{J_\lambda (t)\}_{t\ge 0}$ is the Markov jump process constructed in \eqref{eqn:Jlambda}. Then $(\mathcal{J}_\lambda^1,\mathcal{J}_\lambda^2)$ is a Markov jump process with state space \[\mathcal{E}^2=\{(1,1),(1,-1), (-1,1), (-1,-1)\},\] initial distribution $(1/2,0,0,1/2)$, and has the intensity matrix given by
\begin{align}
\Pi_\lambda=
  \left[\begin{array}{cccc} 
    -\lambda-\alpha  & \alpha & 0&  \lambda\\
\beta&-\lambda-\beta&\lambda&0\\
0 & \lambda&-\lambda -\alpha& \alpha\\
\lambda&0&\beta& -\lambda-\beta
  \end{array}\right].    \label{eq:bivintmatr2}
\end{align}
Indeed, for $j\in\{1,-1\}$, jumps from $(j,1)$ to $(-j,-1)$ are linked to those jumps associated to $\mathcal{J}_\lambda$ which happen with rate $\lambda$, while jumps from $(j,1)$ to $(j,-1)$ occur thanks to desynchronisation events which happen with rate $\alpha$. Similar explanations hold for jumps originating from $(j,-1)$.
}

Define the flip-flop processes $\mathcal{F}^{1}_\lambda=\{F_\lambda^1 (t)\}_{t\ge 0}$ and $\mathcal{F}^2_\lambda=\{F_\lambda^2 (t)\}_{t\ge 0}$ by
\[F^{i}_\lambda(t) :=\sqrt{\lambda}\int_0^t J^i_\lambda (s)\,\dd s,\quad i\in\{1,2\},\,t\ge 0.\]
\begin{theorem}
  \label{th:relfectingexp1strong}
 {As $\lambda \rightarrow \infty$,} the bivariate flip-flop $(\mathcal{F}^1_{\lambda}, \mathcal{F}^2_{\lambda})$ converges strongly to the process $(\mathcal{B},\mathcal{B}^*)$; that is, for all $T>0$,
  \begin{align}
  \label{eq:strongR2}
\lim_{n\rightarrow\infty} \sup_{0\le s\le T} 
	\left\Vert \left(F^1_{\lambda}(s), F^2_{\lambda}(s)\right) - \left(B(s),B^*(s)\right)\right\Vert_\infty=0,
  \end{align}
where $\Vert \cdot \Vert_\infty$ denotes the max--norm in $\mathds{R}^2$. 

Furthermore, for all $q>0$ there exists $\varepsilon>0$ such that
\begin{align}
	\label{eq:ratestrong1}
	\mathds{P}\left( \sup_{0\le s\le T} 
	\left\Vert\left(F^1_{\lambda}(s), F^2_{\lambda}(s)\right) - \left(B(s),B^*(s)\right)\right\Vert_\infty  \ge  {\eta_{q, \lambda}} \varepsilon (\log \lambda)\lambda^{-1/4}\right) =o(\lambda^{-q}), \end{align}
where $\eta_{q, \lambda} := \max(1, 2q \log \lambda)$. The term $ \eta_{q, \lambda} \varepsilon(\log \lambda)\lambda^{-1/4}$ is said to be the \emph{rate of strong convergence} of $(\mathcal{F}^1_\lambda, \mathcal{F}^2_\lambda)$ to $(\mathcal{B}, \mathcal{B}^*)$.
	
%
\end{theorem}
\begin{proof}
Note that 
\[
  \sup_{0\le s\le T} \left \Vert\left(F^1_{\lambda}(s), F^2_{\lambda}(s)\right) - \left(B(s),B^*(s)\right)\right \Vert_\infty \le \sup_{0\le s\le T} \left| F^1_{\lambda}(s) - B(s)\right| + \sup_{0\le s\le T}\left| F^2_{\lambda}(s) - B^*(s)\right|.
\]
Then, (\ref{eq:strongR2}) follows once we prove that $\mathcal{F}^1_{\lambda}$ strongly converges to $\mathcal{B}$, and that $\mathcal{F}^2_{\lambda}$ strongly converges to $\mathcal{B}^*$. The former is a consequence of $\mathcal{F}^1_{\lambda}=\mathcal{F}_{\lambda}$, with $\mathcal{F}_{\lambda}$ as defined in (\ref{eq:strongF5}), and the strong convergence (\ref{eq:strongF}) proved in \cite{nguyen2020explicit}. In fact, Nguyen and Peralta~\cite{nguyen2020explicit} proved that for all $q>0$ there exists $\varepsilon>0$ such that
\begin{align}
	\label{eq:ratestrong1}
	\mathds{P}\left(\sup_{0\le s\le T}|F_{\lambda}(s) - B(s))|\ge \varepsilon(\log \lambda)\lambda^{-1/4}\right)=o(\lambda^{-q}); \end{align}
the term $\varepsilon(\log \lambda)\lambda^{-1/4}$ {is} the \emph{rate of strong convergence} of $\mathcal{F}_\lambda$ to $\mathcal{B}$.

The case of the convergence of $\mathcal{F}^2_\lambda$ to $\mathcal{B}^*$ is more complicated because the synchronisation/desynchronisation events may cause further separation of the paths, in comparison to that of $\mathcal{F}_\lambda$ to $\mathcal{B}$. More precisely, let $\{\rho_k\}_{k\ge 0}$ be the jump times of $\mathcal{X}$, defined by $\rho_{k+1} := \inf\{s>\rho_k: X(s-)\neq X(s)\}$ with $\rho_0=0$. Now, suppose that $\rho_1\le T<\rho_2$: then,
\begin{align*}
\sup_{\rho_1\le s\le T}|F^2_{\lambda}(s) - B^*(s)| 
& {= \sup_{\rho_1 \leq s \leq  T} \left| F_{\lambda}(\rho_1)  + \left(-F_{\lambda}(s) + F_{\lambda}(\rho_1)\right) - [B(\rho_1) + (-B(s) + B(\rho_1))] \right| } 
\\ 
& {= \sup_{\rho_1 \leq s \leq  T}  \left| 2 F_{\lambda} (\rho_1) - F_{\lambda}(s) - 2B (\rho_1) + B(s)\right|}  
\\ 
& = \sup_{\rho_1\le s\le T}|{2}(F_\lambda(\rho_1)-B(\rho_1)) + (B(s) - F_\lambda(s))|\\
& \le {2} |F_\lambda(\rho_1)-B(\rho_1)| + \sup_{\rho_1\le s\le T}\left|F_{\lambda}(s) - B(s)\right|\\
& \le {3} \sup_{0\le s\le T}\left| F_{\lambda}(s) - B(s)\right|{.}
\end{align*}
implying that
\[\sup_{0\le s\le T} \left| F^2_{\lambda}(s) - B^*(s) \right| \le {3} \sup_{0\le s\le T}\left| F_{\lambda}(s) - B(s) \right|.\]
Using recursive arguments, we get the bound for the general case
\begin{align}
\sup_{0\le s\le T}\left| F^2_{\lambda}(s) - B^*(s)\right| & \le (2R - 1) \left[\sup_{0\le s\le T} \left| F_{\lambda}(s) - B(s)\right| \right],	\label{eq:auxR1}
\end{align}
where $R:= \inf\{k\ge 1: \rho_k>T\} $. Equation \eqref{eq:auxR1} implies that the convergence of $\mathcal{F}^2_\lambda$ to $\mathcal{B}^*$ is \emph{slower} than that of $\mathcal{F}_\lambda$ to $\mathcal{B}$. 

To precisely measure its rate of strong convergence, note that since $R$ has {a} Poissonian {tail} of parameter $\gamma_0 T $ with $\gamma_0:= \alpha\wedge\beta$; {by} \cite{glynn1987upper},
$
\mathds{P}(R> n)= o\left(e^{-n(\log n -\gamma_0T)}\right). 
$
{This} in turn {gives}
\begin{align}
	\label{eq:auxR2}
\mathds{P}(R> q\log\lambda)= o\left(\lambda^{-q}\right).
\end{align}
Equations (\ref{eq:ratestrong1}), (\ref{eq:auxR1}) and (\ref{eq:auxR2}) suggest that the rate of strong convergence of $\mathcal{F}_\lambda^2$ to $\mathcal{B}^*$ is $\varepsilon^*(\log \lambda)^2\lambda^{-1/4}$ where $\varepsilon^* := 2 \varepsilon q$. Indeed,
\begin{align*}
&\mathds{P}\left(\sup_{0\le s\le T}|F^2_{\lambda}(s) - B^*(s)|\ge \varepsilon^*(\log \lambda)^2\lambda^{-1/4}\right)\\
&\quad\le \mathds{P}\left((2R - 1) \left[\sup_{0\le s\le T} \left| {F}_{\lambda}(s) - B(s)\right| \right] \ge \varepsilon^*(\log \lambda)^2\lambda^{-1/4}\right)\\
&\quad\le \mathds{P}\left( (2R - 1) \left[\sup_{0\le s\le T}\left| {F}_{\lambda}(s) - B(s)\right| \right] \ge \varepsilon^*(\log \lambda)^2\lambda^{-1/4}, R \le q\log\lambda\right) \\
& \quad \quad + \mathds{P}(R> q\log\lambda)\\
& \quad\le \mathds{P}\left(\sup_{0\le s\le T}|F^1_{\lambda}(s) - B(s)| \ge \varepsilon(\log \lambda)\lambda^{-1/4}\right) + \mathds{P}(R> q\log\lambda)=o(\lambda^{-q}),
\end{align*}
and, thus, $\sup_{0\le s\le T}\left|F^2_{\lambda}(s) - B^*(s)\right|$ converges almost surely to $0$.

{Finally, the strong convergence rate for the bivariate process $(\mathcal{F}^1_{\lambda}, \mathcal{F}^2_{\lambda})$ follows from taking the maximum between the two strong convergence rates of the marginal processes $\mathcal{F}^1_{\lambda}$ and $\mathcal{F}^2_{\lambda}$, to $\mathcal{B}$ and $\mathcal{B}^*$, respectively.} 
\end{proof}

\subsection{Dependence of the limiting process}	\label{sec:dependence1}
	
 As previously mentioned, the process
$(\mathcal{B},\mathcal{B}^*)$ 
is not Gaussian\rouge{. I}n this section we examine its  dependency structure.
%

	%
\begin{theorem}
	\label{th:correlation1}
The correlation coefficient function 
\[
	\mathrm{Corr}(t) = \frac{\mathds{E}\left[B(t)B^*(t)\right]-\mathds{E}[B(t)]\mathds{E}[B^*(t)]}{\sqrt{\mbox{\emph{Var}}(B(t))}\sqrt{\mbox{\emph{Var}}(B^*(t))}} = \frac{\mathds{E}\left[B(t)B^*(t)\right]}{t} \quad \mbox{for $t > 0$.} 
	\]
of $\mathcal{B}$ and $\mathcal{B}^*$ is given by 
\begin{align}
	\label{eq:corrf1}
 \mathrm{Corr}(t) = \frac{\alpha^{-1} - \beta^{-1}}{\alpha^{-1}+\beta^{-1}} + \frac{2\alpha(1-e^{-t(\alpha+\beta)})}{t(\alpha+\beta)^2}.
\end{align}
\end{theorem}
\begin{proof} Fix $t\ge 0$. Let $M_s:=B({t\wedge s})$ and $N_s:=B^*({t\wedge s})$ for all $s\ge 0$. The processes $\{M_s\}_{s\ge 0}$ and $\{N_s\}_{s\ge 0}$ are martingales bounded in $L_2$, so that \cite[Prop.~4.15]{legall2018brownian}
\[
	\mathds{E}\big[B(t)B^*(t)\big] = \mathds{E}[M_\infty N_\infty] = \mathds{E}[\langle M, N\rangle_\infty],\]
where $\{\langle M, N\rangle_s\}_{0\le s}$ denotes the bracket process of $\{M^t_s\}_{s\ge 0}$ and $\{N^t_s\}_{s\ge 0}$ \cite[Defn~4.14]{legall2018brownian}. Since $\{N_s\}_{s\ge 0}$ admits the stochastic integral representation (\ref{eq:defBstar}), then {by}~\cite[Thm~5.4]{legall2018brownian}
\begin{align*}
\mathds{E}[\langle M, N\rangle_\infty] = \mathds{E}\left[\int_{0}^t (-1)^{X(s)}\dd s\right] = \int_0^t \mathds{E}\left[(-1)^{X(s)}\right]\dd s. 
\end{align*}

Straightforward computations show that
\begin{align*}
\mathds{P}(X(s)=0)=\left([1,0]\exp\left( \Pi s\right)\right)_1 &= \frac{\alpha^{-1}}{\alpha^{-1}+\beta^{-1}} + \frac{\alpha}{\alpha+\beta}e^{-s(\alpha+\beta)},\\
\mathds{P}(X(s)=1)=\left([1,0]\exp\left( \Pi s\right)\right)_2 &=\frac{\beta^{-1}}{\alpha^{-1}+\beta^{-1}} - \frac{\alpha}{\alpha+\beta}e^{-s(\alpha+\beta)}.
\end{align*}
{T}hus, 
\begin{align*}
\mathds{E}\big[B(t)B^*(t)\big]& = \int_0^t\mathds{E}\left((-1)^{X(s)}\right)\dd s\\
& = \int_0^t \frac{\alpha^{-1} - \beta^{-1}}{\alpha^{-1}+\beta^{-1}} + \frac{2\alpha}{\alpha+\beta}e^{-s(\alpha+\beta)}\dd s\\
& =  t \frac{\alpha^{-1} - \beta^{-1}}{\alpha^{-1}+\beta^{-1}} + \frac{2\alpha(1-e^{-t(\alpha+\beta)})}{(\alpha+\beta)^2}.
\end{align*}
\end{proof}
%


\begin{Remark} We observe from Theorem \ref{th:correlation1} that the
    limiting process $(\mathcal{B},\mathcal{B}^*)$ has a time-dependent
    correlation function, which starts in $1$, is strictly decreasing
    and converges to
    $(\alpha^{-1}-\beta^{-1})/(\alpha^{-1}+\beta^{-1})$ as
    $t\rightarrow\infty$. From a modelling perspective, this provides
    an alternative to classic correlated bivariate Brownian motion
    models, for which the correlation function remains constant over
    time.
\end{Remark}

A{n} analogous construction can be made so that $\mathcal{B}^*$ starts in a desynchronized environment, that is, $\mathds{P}(X(0)=1)=1$. A slight modification to the proof of Theorem~\ref{th:correlation1} shows that the correlation coefficient for such a construction is given by
\[
	\mathrm{Corr}(t)=\frac{\alpha^{-1} - \beta^{-1}}{\alpha^{-1}+\beta^{-1}} - \frac{2\beta(1-e^{-t(\alpha+\beta)})}{t(\alpha+\beta)^2}.
\]

\section{Reflection at MAP times}\label{sec:MAP}
In this section, we construct a flip-flop approximation to a new class
of bivariate Brownian motion processes with a more flexible
time-dependent correlation function, which we call
\emph{two-dimensional MAP alternating Brownian motion}. They are a
generalization of the process in Section~\ref{sec:exprefl}
 as the synchronisation and desynchronisation epochs
now occur at arrival epochs of a continuous-time MAP. 
%
In continuous time, a MAP of parameters $(\bm{b},C, D)$ is a
counting process $\mathcal{K}$ driven by an underlying Markov jump
process with initial distribution $\bm{b}$ and intensity matrix
$C+D$. The epochs of increase of
$\mathcal{K}=\{K(t)\}_{t \geq 0}$ coincide with those jumps epochs
which occur due to the intensities of the matrix $D$. 
See \cite{latouche1999introduction} for a detailed exposition
on the topic.

\begin{definition} 
	Let $\mathcal{K} = \{K(t)\}_{t \geq 0}$ be a continuous-time MAP$(\bm{b},C,D)$, and $\mathcal{B}$ be a standard Brownian motion. Define $\mathcal{B}^*=\{B^*(t)\}_{t\ge 0}$ such that 
\[
B^*(t) = \int_0^t (-1)^{K(s)}\dd B_s, \quad t \geq 0. 
\]
The process $(\mathcal{B}, \mathcal{B}^*)$ is called a two-dimensional MAP alternating Brownian motion.
\end{definition} 

\subsection{Construction} 
	\label{subsec:constructMAP}  
    Here, let $\mathcal{J}_\lambda^1 = \mathcal{J}_\lambda$ where $\mathcal{J}_\lambda$ is defined as in \eqref{eqn:Jlambda}, and let $\mathcal{J}_\lambda^2=\{J_\lambda^2 (t)\}_{t\ge 0}$ where
  	\begin{align*} 
		{J_\lambda^2 (t) := (-1)^{K(t)}J_\lambda (t),\quad t\ge 0.}
	\end{align*} 
Define the flip-flop processes $\mathcal{F}^{1}_\lambda=\{F_\lambda^1 (t)\}_{t\ge 0}$ and $\mathcal{F}^2_\lambda=\{F_\lambda^2 (t)\}_{t\ge 0}$ by
	\begin{align*} 
		F^{i}_\lambda(t) := \sqrt{\lambda}\int_0^t \pi^*_i(\bm{J}_\lambda (s))\,\dd s,\quad i\in\{1,2\},\,t\ge 0,
	\end{align*}
where
$\bm{\mathcal{J}}_\lambda=(\mathcal{J}_\lambda^1,\mathcal{J}_\lambda^2,
  \mathcal{K})$, and 
	\begin{align*} 
		\pi_1^*((1,\mathcal{E})\times\mathcal{S}) & =\pi_2^*((\mathcal{E},1)\times\mathcal{S})=1, \\
		\pi_1^*((-1,\mathcal{E})\times\mathcal{S}) & =\pi_2^*((\mathcal{E},-1)\times\mathcal{S})=-1. 
	\end{align*} 
	 {The process $\bm{\mathcal{J}}_\lambda$ is a Markov jump
           process with state space $\mathcal{E}^2\times \mathcal{S}$,
           whose distribution is described in Theorem \ref{t:bivint}
           below. 
}

\begin{theorem} 
   \label{t:bivint}
The phase process $\boldsymbol{\mathcal{J}}_{\lambda} = (\mathcal{J}^1_{\lambda},\mathcal{J}^2_{\lambda},
  \mathcal{K})$, which lives on 
space $\mce^2\times\mathcal{S}$, has the initial distribution $(1/2\bm{b},\bm{0},\bm{0}, 1/2\bm{b})$ and the intensity matrix
\begin{align}
	\label{eq:bivintmatr3}
Q_\lambda =	%
\left[\begin{array}{cccc} 
		-\lambda I +C & D & 0 & \lambda I \\
D & -\lambda I + C & \lambda I & 0\\
0 & \lambda I &-\lambda I + C& D\\
\lambda I& 0 & D& -\lambda I + C\
	\end{array} \right],
	\end{align}
where $I$ is an $|\mathcal{S}| \times |\mathcal{S}|$ identity matrix. 
\end{theorem}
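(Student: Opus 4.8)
The plan is to follow exactly the template of the proof for the exponentially alternating case (Theorem with intensity matrix \eqref{eq:bivintmatr2}), replacing scalar transition probabilities by the matrix blocks coming from the discrete-time MAP $\mathcal{M}^n$. First I would record that, by construction, $\mathcal{J}_{\lambda_n}$ changes state at each epoch $\xi_k^n$ at rate $\lambda_n$, and that the second coordinate $\mathcal{U}^n$ (equivalently $\mathcal{V}^n$) is piecewise constant on the intervals $[\chi_j^n,\chi_{j+1}^n)$; hence $\boldsymbol{\mathcal{J}}_{\lambda_n}$ is a Markov jump process on $\mathcal{E}^2\times\mathcal{S}$ whose only possible jump epochs are the $\xi_k^n$, so every diagonal block of $Q_n$ equals $-\lambda_n I$. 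The initial distribution is $(\bm 0,\bm 0,\bm 0,\bm b)$ because $J_{\lambda_n}(0)=-1$, the two coordinates start synchronised (so $J^2_{\lambda_n}(0)=-1$ as well, putting us in the $(-1,-1)$ block), and $V^n(0)$ has law $\bm b$.

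Next I would split the analysis of the off-diagonal blocks into the same two cases as before. When $J^1_{\lambda_n}(t)=-1$, Remark \ref{r:constant} and \eqref{e:jofchi} say the next $\xi_{k'}^n$ has $k'$ odd, so it cannot be one of the $S_i^n$; thus both sign-coordinates flip and the $\mathcal{S}$-coordinate evolves as a plain step of $\mathcal{V}^n$ (no switching epoch, so no MAP-arrival is forced), giving transitions $(-1,-1)\to(1,1)$ and $(-1,1)\to(1,-1)$ with the $\mathcal{S}$-block $\lambda_n I$ — note that a step of $\mathcal{V}^n$ at a non-switching epoch carries the full one-step kernel $I+(\lambda_n/2)^{-1}(C+D)$, but only the "no MAP-arrival" part $I+(\lambda_n/2)^{-1}C$ is admissible here, since a MAP-arrival would by definition be a switching epoch; multiplying the probability matrix $I+(\lambda_n/2)^{-1}C$ by the rate $\lambda_n$ yields the block $\lambda_n I+2C$, and the complementary MAP-arrival part contributes $2D$ into the block where $J^2$ does not flip. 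For the synchronised state $(1,1)$: with "probability matrix" $I+(\lambda_n/2)^{-1}C$ the next $\xi_k^n$ is an ordinary (odd-indexed) epoch, both signs flip, landing in $(-1,-1)$ while $\mathcal{V}^n$ makes a non-arrival step; with "probability matrix" $(\lambda_n/2)^{-1}D$ the epoch is a desynchronisation point $S_i^n$ with $i$ odd, so by Remark \ref{r:constant} $J^2_{\lambda_n}$ stays $+1$ while $J^1_{\lambda_n}$ flips to $-1$, landing in $(-1,1)$, and $\mathcal{V}^n$ records the MAP-arrival. Multiplying by $\lambda_n$ gives the first row $[-\lambda_n I,\ 0,\ 2D,\ \lambda_n I+2C]$; the desynchronised state $(1,-1)$ is handled identically with $C$ and $D$ interchanged as the roles of synchronisation/desynchronisation swap, producing the second row $[0,\ -\lambda_n I,\ \lambda_n I+2C,\ 2D]$.

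The one point requiring genuine care — and the step I expect to be the main obstacle — is the bookkeeping of which one-step kernel of the discrete-time MAP $\mathcal{M}^n$ acts at a given $\xi_k^n$, and in particular verifying that the decomposition "one-step kernel $=$ (non-arrival part $I+(\lambda_n/2)^{-1}C$) $+$ (arrival part $(\lambda_n/2)^{-1}D$)" lines up precisely with the synchronising/desynchronising dichotomy encoded by the $S_i^n$. Concretely, one must check that the switching epochs $\{\nu_n(k)\}$ of the $\mathcal{Y}_n$-side (equivalently the epochs where $\mathcal{M}^n$ increases) are exactly the indices where $\mathcal{V}^n$ makes an arrival-step, and that at every other $\xi_k^n$ the second coordinate copies $J^1_{\lambda_n}$ or its negative without any constraint linking it to $C$ versus $D$; this is where \cite[Theorem 4.4]{nguyen2019strong} (giving $\mathcal{M}^n$ as a discrete-time MAP$(\bm b, I+(\lambda_n/2)^{-1}C,(\lambda_n/2)^{-1}D)$ with underlying chain $\mathcal{V}^n$) and Remark \ref{r:constant} must be combined. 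Once that correspondence is pinned down, multiplying the relevant sub-stochastic blocks by the uniform jump rate $\lambda_n$ and assembling the four block-rows gives \eqref{eq:bivintmatr3}, and the identification of the initial distribution completes the proof.
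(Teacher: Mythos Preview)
Your overall strategy matches the paper's: identify the jump epochs $\{\xi_k^n\}$, note that all diagonal blocks are $-\lambda_n I$, verify the initial distribution, and then do case analysis on the $\mathcal{E}^2$-component. Your treatment of the initial distribution and of the first two block-rows (states in $\{(1,1),(1,-1)\}\times\mathcal{S}$) is essentially correct, modulo a harmless slip in calling the next epoch ``odd-indexed'' when it is in fact even (a $\chi$-epoch).

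The genuine gap is in your handling of the last two block-rows. You write that from a state with $J^1_{\lambda_n}=-1$ ``the $\mathcal{S}$-coordinate evolves as a plain step of $\mathcal{V}^n$'', and then try to reconcile this with the target block $\lambda_n I$ by restricting to the non-arrival part $I+(\lambda_n/2)^{-1}C$, arriving at $\lambda_n I+2C$, and sending $2D$ to another block. None of this is consistent with the stated matrix, whose third and fourth block-rows contain only $\lambda_n I$ off the diagonal. The point you are missing is that, by definition \eqref{e:xnoft}, $U^n(t)=V^n(j)$ is constant on each interval $[\chi_j^n,\chi_{j+1}^n)$, so the $\mathcal{S}$-coordinate can change \emph{only} at the even-indexed epochs $\xi_{2j}^n=\chi_j^n$. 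When $J^1_{\lambda_n}(t)=-1$ the next jump is at an odd $\xi$-epoch, and there $U^n$ does not move at all: no step of $\mathcal{V}^n$ is taken, the $\mathcal{S}$-kernel is the identity, and the block is exactly $\lambda_n I$. The paper records this as ``$U^n(t)$ remains constant when $\mathcal{J}^1_{\lambda_n}$ jumps from $-1$ to $+1$''. Once you replace your ``plain step of $\mathcal{V}^n$'' by ``no step of $\mathcal{V}^n$'' at odd $\xi$-epochs, the last two rows fall out immediately and the bookkeeping worry you flag as the main obstacle disappears.
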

\begin{proof}
Fix $j,k\in\{1,-1\}$ and $t\ge 0$, and suppose that $(J^1_\lambda(t),
J^2_\lambda(t) ,
  K(t))$  is in some state in $(j,k)\times\mathcal{S}$. In {that case},
there are three types of possible jumps that can occur immediately
after $t$, which we explain next. 
\begin{itemize}
  \item Jump that stays within $(j,k)\times\mathcal{S}$: This jump does not cause a change in direction of $F^1_\lambda({t+})$ nor $F^2_\lambda({t+})$. It is associated {with} a \emph{hidden} jump of the MAP $\mathcal{K}$, which occurs according to the {non-diagonal} intensities of $C$. 
  \item Jumps to $(j,-k)\times\mathcal{S}$: In this case, a {synchronisation} or desynchronisation event occurs, causing a change in direction for $F^2({t+})$. This happens according to the intensities of the matrix $D$.
  \item Jumps to $(-j,-k)\times\mathcal{S}$: Both $F^1_\lambda({t+})$
    and $F^2_\lambda({t+})$ change direction, which is associated to a
    jump epoch of $\mathcal{J}_\lambda$ that occurs with intensity
    $\lambda$. The matrix $\lambda I$ guarantees that for a jump of
    this type, the second coordinate in $\mathcal{S}$ stays fixed.
\end{itemize}
Finally, {note} that a jump to $(-j,k)\times\mathcal{S}$ is imposible; {this is} because the collection of epochs at which $\mathcal{F}^1_\lambda$ changes direction is a subset of the collection of epochs at which $\mathcal{F}^2_\lambda$ changes direction.
\end{proof} 

\begin{theorem}
	\label{th:relfectingmap1}
The bivariate flip-flop $(\mathcal{F}^1_{\lambda},
\mathcal{F}^2_{\lambda})$ with the underlying process
$(\mathcal{J}^1_{\lambda},\mathcal{J}^2_{\lambda})$ converges
strongly to $(\mathcal{B},\mathcal{B}^*)$ as $\lambda \rightarrow \infty$. 
\end{theorem}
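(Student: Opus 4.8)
The plan is to mimic exactly the proof of Theorem~\ref{th:relfectingexp1strong}, reducing the bivariate strong convergence to two separate one-dimensional strong convergence statements. As before, for every $T>0$ we bound
\[
\sup_{0\le s\le T}\bigl\Vert(\mathcal{F}^1_{\lambda_n}(s),\mathcal{F}^2_{\lambda_n}(s))-(\mathcal{B}(s),\mathcal{B}^*(s))\bigr\Vert_\infty
\le \sup_{0\le s\le T}|\mathcal{F}^1_{\lambda_n}(s)-\mathcal{B}(s)| + \sup_{0\le s\le T}|\mathcal{F}^2_{\lambda_n}(s)-\mathcal{B}^*(s)|,
\]
so it suffices to show each summand tends to $0$ almost surely. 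The first summand is handled exactly as in Theorem~\ref{th:relfectingexp1strong}: by construction $\pi_1(J^1_{\lambda_n}(\cdot))=J_{\lambda_n}(\cdot)$, hence $\mathcal{F}^1_{\lambda_n}=\mathcal{F}_{\lambda_n}$, and strong convergence to $\mathcal{B}$ is the known one-dimensional result~\eqref{eq:strongF}.

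For the second summand I would invoke the one-dimensional strong approximation result for Markov-modulated Brownian motion, namely \cite[Theorem 1.1]{nguyen2019strong}, applied to $\mathcal{B}^*(t)=\int_0^t(-1)^{K(s)}\,\dd B_s$. The key point is that, just as in the exponential case, $\mathcal{B}^*$ here is an MMBM-type process: its local variance is modulated by the finite-state process $\mathcal{U}$ underlying the MAP $\mathcal{K}$ (equivalently by the parity of $K$), and the flip-flop $\mathcal{F}^2_{\lambda_n}$ is precisely the fluid approximation whose phase process $\mathcal{J}^2_{\lambda_n}$ has been built, via Steps 1--2, to be the morphing/uniformized approximation of that modulating process. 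I would point out that the relevant consistency identities are already in place: the lemma preceding this theorem gives $F^2_{\lambda_n}(\chi_k^n)=B^*(\theta_k^n)$, the minima of $\mathcal{F}^2_{\lambda_n}$ on the intervals $(\chi_k^n,\chi_{k+1}^n)$ match those of $\mathcal{B}$ (inherited from $\mathcal{F}_{\lambda_n}$ up to the synchronizing/desynchronizing sign), $U^n$ is defined from $V^n(j)=U(\theta_j^n)$ so that $U^n\to\mathcal{U}$, and the grid-alignment estimate $\max_{k:\theta_k^n,\chi_k^n<T}|\theta_k^n-\chi_k^n|\to 0$ from \cite{nguyen2019strong} still holds. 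Together these are exactly the hypotheses under which \cite[Theorem 1.1]{nguyen2019strong} yields $\sup_{0\le s\le T}|\mathcal{F}^2_{\lambda_n}(s)-\mathcal{B}^*(s)|\to 0$ almost surely.

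The main obstacle—though it is conceptual rather than computational—is to check that the cited MMBM strong-approximation theorem genuinely applies to $\mathcal{B}^*$ as constructed via a MAP rather than a plain Markov jump process. The subtlety is that the synchronization/desynchronization environment is driven by the counting process $K$ (hence by the $D$-transitions of the MAP), so one must verify that the phase process $\mathcal{J}^2_{\lambda_n}$ produced in Step~2 is the correct uniformized fluid approximant of the level-and-phase pair $(\,\mathrm{sign\ environment},\,\mathcal{U}\,)$, with the same occupation-time laws after scaling by $\lambda_n^{-1/2}$. This is precisely what Theorem~\ref{t:bivint} establishes: $\boldsymbol{\mathcal{J}}_{\lambda_n}$ has intensity matrix $Q_n$, which converges entrywise (after the appropriate blow-up of the synchronizing/desynchronizing rates) to the generator governing $\mathcal{B}^*$. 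So in the write-up I would simply note that, by Theorem~\ref{t:bivint} and the construction of $\mathcal{U}^n$, the process $\mathcal{F}^2_{\lambda_n}$ is the flip-flop approximation of the MMBM $\mathcal{B}^*$ in the exact sense required by \cite[Theorem 1.1]{nguyen2019strong}, and conclude. Combining the two summands gives the claimed strong convergence of $(\mathcal{F}^1_{\lambda},\mathcal{F}^2_{\lambda})$ to $(\mathcal{B},\mathcal{B}^*)$.
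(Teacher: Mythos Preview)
Your proposal is correct and follows essentially the same approach as the paper, which simply states that the proof is analogous to that of Theorem~\ref{th:relfectingexp1strong} and omits the details. Your reduction to two one-dimensional strong convergence statements via the max-norm bound, handling $\mathcal{F}^1_{\lambda_n}\to\mathcal{B}$ by \eqref{eq:strongF} and $\mathcal{F}^2_{\lambda_n}\to\mathcal{B}^*$ by invoking \cite[Theorem~1.1]{nguyen2019strong} for the MMBM $\mathcal{B}^*$, is exactly the intended argument; your additional remarks on why the cited theorem applies in the MAP setting are a welcome elaboration of what the paper leaves implicit.
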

{The proof of this theorem} follows verbatim that of Theorem
\ref{th:relfectingexp1strong} by replacing $\mathcal{X}$ with
$\mathcal{K}$, and {noting} that the number of jumps of $\mathcal{K}$
in $[0,T]$ has a Poissonian tail with rate $\gamma_1T$ where $\gamma_1:=\max_{i\in \mathcal{S}}|C_{ii}|$. 

\subsection{Dependence of the limiting process} 

We assess the correlation between $\mathcal{B}$ and
$\mathcal{B}^*$. {Though the analysis is similar to that of Section \ref{sec:dependence1}, here we use matrix-analytic methods to find a closed-form formula.} 
%
\begin{theorem}
	\label{th:correlationMAP1}
Let $(\mathcal{B}, \mathcal{B}^*)$ be a two-dimensional MAP
alternating Brownian motion. The correlation coefficient function of $\mathcal{B}$ and $\mathcal{B}^*$ is given by
 \[
 	\mathrm{Corr}(t) = \frac{1}{t}\left[\begin{array}{ccc}\bm{b}&\bm{0}& 0\end{array}\right] \exp\left(\left[\begin{array}{rrr} C&D&\bm{e}\\ D&C& -\bm{e}\\ 0&0&0\end{array}\right] t\right) \left[\begin{array}{c}\bm{0}\\\bm{0}\\ 1\end{array}\right],
\]
where $\bm{e}$ denotes a column vector of $1$s of appropriate size.
\end{theorem}
\begin{proof}
{By analogous arguments to those} in the proof of Theorem \ref{th:correlation1}, we have
\begin{align}
	\label{eq:btbtstarMAP}
	\mathds{E}\big[B(t)B^*(t)\big]= \int_0^t\mathds{E}\left[(-1)^{K(s)}\right]\dd s.\end{align} 

Thus, what remains is to compute the transition probabilities of $\{K(s)\}_{s\ge 0}$, and {to} find a closed form formulae for the integral expression in the RHS of (\ref{eq:btbtstarMAP}). First,
\begin{align*}
\mathds{P}(K(s)=0)&=\left[\begin{array}{cc}\bm{b}&\bm{0}\end{array}\right] \exp\left(\left[\begin{array}{cc}C&D\\ D& C\end{array}\right]s\right) \left[\begin{array}{c}\bm{e}\\\bm{0}\end{array}\right]\quad \mbox{and}\\
\mathds{P}(K(s)=1)&=\left[\begin{array}{cc}\bm{b}&\bm{0}\end{array}\right] \exp\left(\left[\begin{array}{cc}C&D\\ D& C\end{array}\right]s\right) \left[\begin{array}{c}\bm{0}\\\bm{e}\end{array}\right],
\end{align*}
so that
\[\mathds{E}\left[(-1)^{K(s)}\right] =\left[\begin{array}{cc}\bm{b}&\bm{0}\end{array}\right] \exp\left(\left[\begin{array}{cc}C&D\\ D& C\end{array}\right]s\right) \left[\begin{array}{r}\bm{e}\\-\bm{e}\end{array}\right].\]
Using \cite[Thm 1]{van1978computing}, we have
\[\exp\left(\left[\begin{array}{rr|r} C&D&\bm{e}\\ D&C& -\bm{e}\\\hline 0&0&0\end{array}\right] t\right)=\left[\begin{array}{c|c} \exp\left(\left[\begin{array}{rr} C&D\\ D&C\end{array}\right] t\right)& \displaystyle\int_0^t \exp\left(\left[\begin{array}{rr} C&D\\ D&C\end{array}\right] s\right)\left[\begin{array}{r}\bm{e}\\-\bm{e}\end{array}\right]\dd s\\&\vspace*{-0.4cm}\\\hline 0 &1\end{array}\right],\]
implying
\[
	\int_0^t\mathds{E}\left((-1)^{K(s)}\right)\dd s = \left[\begin{array}{ccc}\bm{b}&\bm{0}& 0\end{array}\right] \exp\left(\left[\begin{array}{rrr} C&D&\bm{e}\\ D&C& -\bm{e}\\ 0&0&0\end{array}\right] t\right) \left[\begin{array}{c}\bm{0}\\\bm{0}\\ 1\end{array}\right]\]
and the result follows.
\end{proof}

Note that the matrix $\left[\begin{smallmatrix}C&D\\ D&
    C\end{smallmatrix}\right]$ is an intensity matrix, and thus
singular. This means the identity $\int_0^t e^{As}\dd
s=(A)^{-1}(e^{At}-I)$ could not be used for
$A=\left[\begin{smallmatrix}C&D\\ D& C\end{smallmatrix}\right]$ in the
proof of Theorem \ref{th:correlationMAP1}.

\begin{Remark}  {We can recover the formula in Theorem \ref{th:correlation1} by letting
\begin{align*} 
	\bm{b}=(1/2,1/2),\quad C= \left[\begin{array}{rr} -			\alpha & 0\\ 
		0&-\beta
	\end{array}\right], \quad D= \left[\begin{array}{rr} 0& \alpha\\ \beta & 0\end{array}\right].
\end{align*} 
This
would give us a bivariate flip-flop with 8 states, {partitioned into two communication classes of size 4 with the same transition rates, and which can be identified with a 4-state
model instead.}}
\end{Remark}

\section*{Acknowledgements.}
The second and third authors are affiliated with Australian Research Council (ARC) Centre of Excellence for Mathematical and Statistical Frontiers (ACEMS). We would like to also acknowledge the support of the ARC DP180103106 grant.

\bibliographystyle{abbrv}
\end{document}